\theoremstyle{definition}
\newtheorem{theorem}{Theorem}
\newtheorem{proposition}[theorem]{Proposition}
\numberwithin{equation}{section}
\numberwithin{theorem}{section}
\begin{document}

\begin{center}
{\bf{\Large On quintic identities }}
\end{center}

\begin{center}
By Kazuhide Matsuda
\end{center}

\begin{center}
Faculty of Fundamental Science, National Institute of Technology, Niihama College, \\
7-1 Yagumo-chou, Niihama, Ehime, 792-8580, Japan \\
e-mail: matsuda@sci.niihama-nct.ac.jp  
\end{center}

\noindent
{\bf Abstract}
In this paper, 
we derive quintic versions of the cubic identities of Farkas and Kra. 
In particular, we believe that our results can be easily generalized to $k$ th power versions, $(k=7,9,11,\ldots).$
Moreover, we investigate the algebraic structure of theta constants of level five. 
\newline
{\bf Key words} theta functions; theta constants; rational characteristic; resultant
\newline
{\bf MSC(2010)} 14K25

\section{Introduction}
Throughout this paper, 
let $\mathbb{N},$ $\mathbb{N}_0,$ $\mathbb{Z},$ $\mathbb{Q},$ $\mathbb{R}$ and $\mathbb{C}$ denote 
the set of positive integers, nonnegative integers, integers, quotient numbers, real numbers and complex numbers, 
respectively. 
Moreover, 
let the upper half plane be defined by 
$$
\mathbb{H}^2=\{ \tau \in\mathbb{C} \, | \, \Im \tau >0 \}. 
$$
\par
Following Farkas and Kra \cite{Farkas-Kra-1}, 
we introduce the theta function with characteristic 
$\left[
\begin{array}{c}
\epsilon \\
\epsilon^{\prime}
\end{array}
\right] \in\mathbb{R}^2,$ 
which is defined by  
\begin{equation*}
\theta 
\left[
\begin{array}{c}
\epsilon \\
\epsilon^{\prime}
\end{array}
\right] (\zeta, \tau) 
=
\theta 
\left[
\begin{array}{c}
\epsilon \\
\epsilon^{\prime}
\end{array}
\right] (\zeta)
:=\sum_{n\in\mathbb{Z}} \exp
\left(2\pi i\left[ \frac12\left(n+\frac{\epsilon}{2}\right)^2 \tau+\left(n+\frac{\epsilon}{2}\right)\left(\zeta+\frac{\epsilon^{\prime}}{2}\right) \right] \right),
\end{equation*}
which uniformly and absolutely converges on compact subsets of $\mathbb{C}\times\mathbb{H}^2,$ 
where $\mathbb{H}^2$ is the upper half-plane.  
The theta constants are defined by 
\begin{equation*}
\theta 
\left[
\begin{array}{c}
\epsilon \\
\epsilon^{\prime}
\end{array}
\right]
:=
\theta 
\left[
\begin{array}{c}
\epsilon \\
\epsilon^{\prime}
\end{array}
\right] (0, \tau). 
\end{equation*}
\par
Farkas and Kra \cite{Farkas-Kra-1} 
treated the theta constants with rational characteristics, 
that is, 
the case where $\epsilon$ and $\epsilon^{\prime}$ are both rational numbers, 
and 
derived a number of interesting theta constant identities. 
\par
One of the most famous theta constant identities is Jacobi's quartic, 
which is given by 
\begin{equation*}
\theta^4 
\left[
\begin{array}{c}
0 \\
0
\end{array}
\right]
=
\theta^4 
\left[
\begin{array}{c}
1\\
0
\end{array}
\right]
+
\theta^4 
\left[
\begin{array}{c}
0 \\
1
\end{array}
\right]. 
\end{equation*}
\par
Based on the uniformization theory, 
Farkas and Kra \cite{Farkas-Kra-paper} proved the following cubic identities: 
for every $\tau\in\mathbb{H}^2,$ 
\begin{equation}
\label{Farkas-Kra-cubic-(1)}
\theta^3
\left[
\begin{array}{c}
\frac13 \\
\frac13
\end{array}
\right]
+
\theta^3
\left[
\begin{array}{c}
\frac13 \\
\frac53
\end{array}
\right]
=
\theta^3
\left[
\begin{array}{c}
\frac13 \\
1
\end{array}
\right],
\end{equation}
and 
\begin{equation}
\label{Farkas-Kra-cubic-(2)}
\exp\left(\frac{\pi i}{3}\right)
\theta^3
\left[
\begin{array}{c}
\frac13 \\
\frac13
\end{array}
\right]
+
\exp\left(\frac{2\pi i}{3}\right)
\theta^3
\left[
\begin{array}{c}
\frac13 \\
\frac53
\end{array}
\right]
=
\theta^3
\left[
\begin{array}{c}
1 \\
\frac13
\end{array}
\right].
\end{equation}
\par
Our aim is to obtain quintic versions of the identities (\ref{Farkas-Kra-cubic-(1)}) and  (\ref{Farkas-Kra-cubic-(2)}). 
Moreover, we investigate the algebraic structure 
of the following theta constants:
\begin{equation*}
\theta
\left[
\begin{array}{c}
\frac{j}{5} \\
\frac{k}{5}
\end{array}
\right],
\,\,
\theta
\left[
\begin{array}{c}
1 \\
\frac{j}{5}
\end{array}
\right],  \,\,
(j=1,3, k=1,3,5,7,9). 
\end{equation*}
\par
This paper is organized as follows. 
In Section \ref{sec:properties}, 
we review the properties of the theta functions. 
In Section \ref{sec:quintic}, 
we derive quintic versions of  the identities (\ref{Farkas-Kra-cubic-(1)}) and  (\ref{Farkas-Kra-cubic-(2)}). 
In particular, we believe that our results can be easily generalized to $k$ th power versions, $(k=7,9,11,\ldots).$
\par
In Section \ref{sec:rational(1)}, 
we prove that 
$\scriptsize{
\theta
\left[
\begin{array}{c}
1 \\
1/5
\end{array}
\right]
/
\theta
\left[
\begin{array}{c}
1 \\
3/5
\end{array}
\right]
}
$ 
is given by a rational expression of the following theta constants:
\begin{equation}
\label{eqn:theta-level5}
\theta
\left[
\begin{array}{c}
\frac{j}{5} \\
\frac{k}{5}
\end{array}
\right],
(j=1,3, k=1,3,5,7,9). 
\end{equation}
In Section \ref{sec:rational(2)}, 
we show that 
$
\scriptsize{
\theta
\left[
\begin{array}{c}
1 \\
1/5
\end{array}
\right],
}
$ or 
%%%%%%%%%%
$
\scriptsize{
\theta
\left[
\begin{array}{c}
1 \\
3/5
\end{array}
\right]
}
$ is given by rational expessions of the theta constants (\ref{eqn:theta-level5}). 
\par
In Section \ref{sec:rational(3)}, 
for $j=1,3,$ 
we prove that 
one of 
$\scriptsize{
\theta
\left[
\begin{array}{c}
j/5 \\
k/5
\end{array}
\right]
}, \,(k=1,3,5,7,9),
$ is given by two rational expressions of four of the following theta constants:
\begin{equation*}
\theta
\left[
\begin{array}{c}
\frac{j}{5} \\
\frac{k}{5}
\end{array}
\right],
(k=1,3,5,7,9). 
\end{equation*}
For this purpose, 
considering the determinant structure, 
we obtain the theta constant identities. 
This method is based on the method of Matsuda \cite{Matsuda}. 
\par
In Section \ref{sec:certain-theta}, 
for $j=1,3,$ 
we derive the theta constant identities among 
$\scriptsize{
\theta
\left[
\begin{array}{c}
j/5 \\
k/5
\end{array}
\right]
}, \,(k=1,3,5,7,9). 
$ 
In Section \ref{sec:algebraic-structure}, 
we investigate the algebraic structure of the theta constants (\ref{eqn:theta-level5}).

\subsubsection*{Remark}
By the identity (\ref{Farkas-Kra-cubic-(1)}), 
Farkas \cite{Farkas} showed that for each nonnegative integer $n\in\mathbb{N}_0,$ 
$$
\sigma(3n+2)
=
3\sum_{k=0}^n 
\delta(3k+1) \delta(3(n-k)+1),
$$
where for each positive integer $n\in\mathbb{N},$ 
$$
\sigma(n)=\sum_{d|n} d, \,\,\mathrm{and} \,\, \delta(n)=d_{1,3}(n)-d_{2,3}(n). 
$$
We believe that 
the theta constant identities of this paper can be applied to number theory. 
However, the computation is more complicated. 

\subsubsection*{Acknowledgments}
We are grateful to Professor H. Watanabe for his useful comments.

\section{The properties of the theta functions}
\label{sec:properties}
We first note that 
for $m,n\in\mathbb{Z},$ 
\begin{equation}
\label{eqn:integer-char}
\theta 
\left[
\begin{array}{c}
\epsilon \\
\epsilon^{\prime}
\end{array}
\right] (\zeta+n+m\tau, \tau) =
\exp(2\pi i)\left[\frac{n\epsilon-m\epsilon^{\prime}}{2}-mz-\frac{m^2\tau}{2}\right]
\theta 
\left[
\begin{array}{c}
\epsilon \\
\epsilon^{\prime}
\end{array}
\right] (\zeta,\tau),
\end{equation}
and 
\begin{equation}
\theta 
\left[
\begin{array}{c}
\epsilon +2m\\
\epsilon^{\prime}+2n
\end{array}
\right] 
(\zeta,\tau)
=\exp(\pi i \epsilon n)
\theta 
\left[
\begin{array}{c}
\epsilon \\
\epsilon^{\prime}
\end{array}
\right] 
(\zeta,\tau).
\end{equation}
Furthermore, 
\begin{equation*}
\theta 
\left[
\begin{array}{c}
-\epsilon \\
-\epsilon^{\prime}
\end{array}
\right] (\zeta,\tau)
=
\theta 
\left[
\begin{array}{c}
\epsilon \\
\epsilon^{\prime}
\end{array}
\right] (-\zeta,\tau)
\,\,
\mathrm{and}
\,\,
\theta^{\prime} 
\left[
\begin{array}{c}
-\epsilon \\
-\epsilon^{\prime}
\end{array}
\right] (\zeta,\tau)
=
-
\theta^{\prime} 
\left[
\begin{array}{c}
\epsilon \\
\epsilon^{\prime}
\end{array}
\right] (-\zeta,\tau).
\end{equation*}
%%%%%%%%%%%%%%%%
\par
For $m,n\in\mathbb{R},$ 
we see that 
\begin{align}
\label{eqn:real-char}
&\theta 
\left[
\begin{array}{c}
\epsilon \\
\epsilon^{\prime}
\end{array}
\right] \left(\zeta+\frac{n+m\tau}{2}, \tau\right)   \notag\\
&=
\exp(2\pi i)\left[
-\frac{m\zeta}{2}-\frac{m^2\tau}{8}-\frac{m(\epsilon^{\prime}+n)}{4}
\right]
\theta 
\left[
\begin{array}{c}
\epsilon+m \\
\epsilon^{\prime}+n
\end{array}
\right] 
(\zeta,\tau). 
\end{align}
%%%%%%%%%%%%%%%%%%%%%%%%%%%%
\par
We note that 
$\theta 
\left[
\begin{array}{c}
\epsilon \\
\epsilon^{\prime}
\end{array}
\right] \left(\zeta, \tau\right)$ has only one zero in the fundamental parallelogram, 
which is given by 
$$
\zeta=\frac{1-\epsilon}{2}\tau+\frac{1-\epsilon^{\prime}}{2}. 
$$
\par
All the theta functions have infinite product expansions, which are given by 
\begin{align}
\theta 
\left[
\begin{array}{c}
\epsilon \\
\epsilon^{\prime}
\end{array}
\right] (\zeta, \tau) &=\exp\left(\frac{\pi i \epsilon \epsilon^{\prime}}{2}\right) x^{\frac{\epsilon^2}{4}} z^{\frac{\epsilon}{2}} \notag \\
                           &\quad 
                           \displaystyle \prod_{n=1}^{\infty}(1-x^{2n})(1+e^{\pi i \epsilon^{\prime}} x^{2n-1+\epsilon} z)(1+e^{-\pi i \epsilon^{\prime}} x^{2n-1-\epsilon}/z),  \label{eqn:Jacobi}
\end{align}
where $x=\exp(\pi i \tau), \,z=\exp(2\pi i \zeta).$ 
\par
Following Farkas and Kra \cite{Farkas-Kra-1}, 
we last define 
$\mathcal{F}_{N}\left[
\begin{array}{c}
\epsilon \\
\epsilon^{\prime}
\end{array}
\right] $ to be the set of the entire functions $f$ that satisfy the two functional equations, 
$$
f(\zeta+1)=\exp(\pi i \epsilon) \,\,f(\zeta),
$$
and 
$$
f(\zeta+\tau)=\exp(-\pi i)[\epsilon^{\prime}+2N\zeta+N\tau] \,\,f(\zeta), \quad \zeta\in\mathbb{C},  \,\,\tau \in\mathbb{H}^2,
$$ 
where 
$N$ is a positive integer and 
$\left[
\begin{array}{c}
\epsilon \\
\epsilon^{\prime}
\end{array}
\right] \in\mathbb{R}^2.$ 
This set of functions is called the space of {\it $N$-th order $\theta$-functions with characteristic}
$\left[
\begin{array}{c}
\epsilon \\
\epsilon^{\prime}
\end{array}
\right]. $ 
Note that 
$$
\dim \mathcal{F}_{N}\left[
\begin{array}{c}
\epsilon \\
\epsilon^{\prime}
\end{array}
\right] =N.
$$
For its proof, see Farkas and Kra \cite[pp.131]{Farkas-Kra-1}.

\section{Quintic identities}
\label{sec:quintic}

\begin{theorem}
\label{thm:quintic-1/5,3/5-1/5,3/5,1,7/5,9/5}
{\it
For every $\tau\in\mathbb{H}^2, $ we have 
\begin{equation}
\label{eqn:quintic-1/5-1/5,3/5,1,7/5,9/5}
\theta^5
\left[
\begin{array}{c}
\frac15   \\
\frac15
\end{array}
\right]
-
\theta^5
\left[
\begin{array}{c}
\frac15   \\
\frac35
\end{array}
\right]
+
\theta^5
\left[
\begin{array}{c}
\frac15   \\
1
\end{array}
\right]
-
\theta^5
\left[
\begin{array}{c}
\frac15   \\
\frac75
\end{array}
\right]
+
\theta^5
\left[
\begin{array}{c}
\frac15   \\
\frac95
\end{array}
\right]
=0,
\end{equation}
and
\begin{equation}
\label{eqn:quintic-3/5-1/5,3/5,1,7/5,9/5}
\theta^5
\left[
\begin{array}{c}
\frac35   \\
\frac15
\end{array}
\right]
-
\theta^5
\left[
\begin{array}{c}
\frac35   \\
\frac35
\end{array}
\right]
+
\theta^5
\left[
\begin{array}{c}
\frac35   \\
1
\end{array}
\right]
-
\theta^5
\left[
\begin{array}{c}
\frac35   \\
\frac75
\end{array}
\right]
+
\theta^5
\left[
\begin{array}{c}
\frac35   \\
\frac95
\end{array}
\right]
=0.
\end{equation}
}
\end{theorem}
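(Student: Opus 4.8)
The plan is to reduce both (\ref{eqn:quintic-1/5-1/5,3/5,1,7/5,9/5}) and (\ref{eqn:quintic-3/5-1/5,3/5,1,7/5,9/5}) to a single vanishing statement about the $5$‑dimensional space $\mathcal{F}_5\left[\begin{smallmatrix}1\\1\end{smallmatrix}\right]$. Taking $m=0$ and $n=2j/5$ in the transformation formula (\ref{eqn:real-char}) gives, for any characteristic,
\[
\theta\left[\begin{smallmatrix}\epsilon\\\epsilon'\end{smallmatrix}\right](\zeta+j/5,\tau)=\theta\left[\begin{smallmatrix}\epsilon\\\epsilon'+2j/5\end{smallmatrix}\right](\zeta,\tau),\qquad\text{hence}\qquad
\theta\left[\begin{smallmatrix}a/5\\(a+2j)/5\end{smallmatrix}\right]=\theta\left[\begin{smallmatrix}a/5\\a/5\end{smallmatrix}\right](j/5,\tau)\quad(a\in\{1,3\}).
\]
Since $\theta\left[\begin{smallmatrix}1/5\\1/5\end{smallmatrix}\right]\in\mathcal{F}_1\left[\begin{smallmatrix}1/5\\1/5\end{smallmatrix}\right]$ and $\theta\left[\begin{smallmatrix}3/5\\3/5\end{smallmatrix}\right]\in\mathcal{F}_1\left[\begin{smallmatrix}3/5\\3/5\end{smallmatrix}\right]$, their fifth powers both lie in $\mathcal{F}_5\left[\begin{smallmatrix}1\\1\end{smallmatrix}\right]$ (the functional equations for the first characteristics $1$ and $3$ coincide). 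So it suffices to prove that the linear functional $L(f):=\sum_{j=0}^{4}(-1)^{j}f(j/5)$ annihilates $\mathcal{F}_5\left[\begin{smallmatrix}1\\1\end{smallmatrix}\right]$: applying this to $f=\theta\left[\begin{smallmatrix}1/5\\1/5\end{smallmatrix}\right](\cdot,\tau)^{5}$ produces (\ref{eqn:quintic-1/5-1/5,3/5,1,7/5,9/5}) directly, while for $f=\theta\left[\begin{smallmatrix}3/5\\3/5\end{smallmatrix}\right](\cdot,\tau)^{5}$ one first rewrites the $j=4$ term using $\theta^{5}\left[\begin{smallmatrix}3/5\\11/5\end{smallmatrix}\right]=e^{3\pi i}\theta^{5}\left[\begin{smallmatrix}3/5\\1/5\end{smallmatrix}\right]=-\theta^{5}\left[\begin{smallmatrix}3/5\\1/5\end{smallmatrix}\right]$, which moves it to the front and yields exactly (\ref{eqn:quintic-3/5-1/5,3/5,1,7/5,9/5}).

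Next I would show that the relations satisfied by all $f\in\mathcal{F}_5\left[\begin{smallmatrix}1\\1\end{smallmatrix}\right]$ at the points $0,1/5,\dots,4/5$ form a one‑dimensional space. Consider the evaluation map $E\colon\mathcal{F}_5\left[\begin{smallmatrix}1\\1\end{smallmatrix}\right]\to\mathbb{C}^{5}$, $f\mapsto(f(0),f(1/5),\dots,f(4/5))$. The function $\Theta_0(\zeta):=\prod_{k=0}^{4}\theta\left[\begin{smallmatrix}1\\1\end{smallmatrix}\right](\zeta-k/5)$ is a product of five first‑order theta functions whose first and second characteristics each sum correctly, so $\Theta_0\in\mathcal{F}_5\left[\begin{smallmatrix}1\\1\end{smallmatrix}\right]$; since the unique zero of $\theta\left[\begin{smallmatrix}1\\1\end{smallmatrix}\right]$ is at the origin, $\Theta_0$ vanishes precisely at $0,1/5,\dots,4/5$. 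As $\dim\mathcal{F}_5\left[\begin{smallmatrix}1\\1\end{smallmatrix}\right]=5$, a nonzero element has exactly five zeros in a fundamental parallelogram and is determined up to a constant by its zero divisor; hence any $f\in\ker E$ has the divisor of $\Theta_0$ and so $\ker E=\mathbb{C}\Theta_0$. Therefore $E$ has rank $4$, and there is a unique (up to scaling) vector $(c_0,\dots,c_4)$ with $\sum_{j}c_{j}f(j/5)=0$ for all $f\in\mathcal{F}_5\left[\begin{smallmatrix}1\\1\end{smallmatrix}\right]$.

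It remains to identify $(c_0,\dots,c_4)$ with $(1,-1,1,-1,1)$. The translation $\zeta\mapsto\zeta+1/5$ maps $\mathcal{F}_5\left[\begin{smallmatrix}1\\1\end{smallmatrix}\right]$ to itself (the extra factor in the $\tau$‑multiplier is $e^{-2\pi i}=1$). Applying the relation to $f(\zeta+1/5)$ and using $f(1)=e^{\pi i}f(0)=-f(0)$ shows that $(-c_4,c_0,c_1,c_2,c_3)$ is again a relation vector, so by uniqueness $c_{j}=c_0\mu^{-j}$ with $\mu^{5}=-1$, i.e. $\mu\in\{e^{(2k+1)\pi i/5}\}$. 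To pin down $\mu=-1$, test the relation on $\Psi_a(\zeta):=\prod_{m=0}^{4}\theta\left[\begin{smallmatrix}a/5\\a/5\end{smallmatrix}\right](\zeta+m/5)\in\mathcal{F}_5\left[\begin{smallmatrix}1\\1\end{smallmatrix}\right]$ for $a\in\{1,3,7,9\}$: tracking the wrap‑around $\theta\left[\begin{smallmatrix}a/5\\a/5\end{smallmatrix}\right](1+r/5)=e^{\pi i a/5}\theta\left[\begin{smallmatrix}a/5\\a/5\end{smallmatrix}\right](r/5)$ gives $\Psi_a(j/5)=e^{\pi i aj/5}\Psi_a(0)$, where $\Psi_a(0)=\prod_{m}\theta\left[\begin{smallmatrix}a/5\\(a+2m)/5\end{smallmatrix}\right]\ne0$ for generic $\tau$. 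If $\mu=e^{\pi i a/5}$ the relation would force $5\Psi_a(0)=0$, a contradiction; ruling out $a=1,3,7,9$ leaves only $\mu=e^{\pi i}=-1$. Since the $c_j$ are numerical constants (the relation varies holomorphically in $\tau$ and the admissible $\mu$ are discrete), $L$ annihilates $\mathcal{F}_5\left[\begin{smallmatrix}1\\1\end{smallmatrix}\right]$ for every $\tau$, and the two identities follow.

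I expect the genuine difficulty to be the third step — proving that the unique linear relation among $f(0),\dots,f(4/5)$ is the alternating one and not one with primitive fifth‑root‑of‑unity coefficients; the characteristic bookkeeping, the dimension count, and the evaluations of $\Theta_0$ and $\Psi_a$ are routine. The only other points to be careful about are that $\Psi_a(0)\neq0$ for generic $\tau$ (so the test functions are nondegenerate) and that the pinned‑down coefficients are $\tau$‑independent, both of which are handled by analytic continuation.
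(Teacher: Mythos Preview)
Your argument is correct, and it is genuinely different from the paper's proof. The paper proceeds by classical residue calculus: it writes down the elliptic function
\[
\varphi(z)=\frac{\theta^5\!\left[\begin{smallmatrix}1\\1\end{smallmatrix}\right](z)}
{\displaystyle\prod_{k\in\{1,3,5,7,9\}}\theta\!\left[\begin{smallmatrix}1/5\\k/5\end{smallmatrix}\right](z)},
\]
computes the five simple-pole residues (each of which is, up to a common factor, one of the $\theta^5\!\left[\begin{smallmatrix}1/5\\k/5\end{smallmatrix}\right]$ with alternating signs), and uses that the residues of an elliptic function sum to zero. The second identity is obtained from the analogous $\psi(z)$ with $1/5$ replaced by $3/5$. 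By contrast, you work inside $\mathcal{F}_5\!\left[\begin{smallmatrix}1\\1\end{smallmatrix}\right]$: you show that evaluation at $0,\tfrac15,\dots,\tfrac45$ has one-dimensional cokernel, and then pin down the unique relation via the translation symmetry together with the test functions $\Psi_a$. Your route unifies (\ref{eqn:quintic-1/5-1/5,3/5,1,7/5,9/5}) and (\ref{eqn:quintic-3/5-1/5,3/5,1,7/5,9/5}) as two specializations of a single linear functional on $\mathcal{F}_5\!\left[\begin{smallmatrix}1\\1\end{smallmatrix}\right]$, and it visibly generalizes to the $k$th-power identities the paper alludes to (replace $5$ by any odd $k$). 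The paper's approach, on the other hand, is shorter and entirely explicit: one elliptic function and five residues per identity, with no need to analyze a kernel or rule out spurious fifth roots of $-1$. Your handling of the delicate step---excluding $\mu=e^{\pi i a/5}$ for $a\in\{1,3,7,9\}$ via $\Psi_a$ and then passing from generic $\tau$ to all $\tau$ by analyticity of the theta constants---is sound.
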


\begin{proof}
Consider the following elliptic functions:
\begin{equation*}
\varphi(z)
=
\frac
{
\theta^5
\left[
\begin{array}{c}
1 \\
1
\end{array}
\right](z)
}
{
%%%%
\theta
\left[
\begin{array}{c}
\frac15 \\
\frac15
\end{array}
\right](z)
%%%%%%%%%%%%
\theta
\left[
\begin{array}{c}
\frac15 \\
\frac35
\end{array}
\right](z)
%%%%%%%%%%%%%%%
\theta
\left[
\begin{array}{c}
\frac15 \\
1
\end{array}
\right](z)
%%%%%%%%%%%%%%%
\theta
\left[
\begin{array}{c}
\frac15 \\
\frac75
\end{array}
\right](z)
%%%%%%%%%%%%%%%
\theta
\left[
\begin{array}{c}
\frac15 \\
\frac95
\end{array}
\right](z)
}, 
\end{equation*}
and
\begin{equation*}
\psi(z)
=
\frac
{
\theta^5
\left[
\begin{array}{c}
1 \\
1
\end{array}
\right](z)
}
{
%%%%
\theta
\left[
\begin{array}{c}
\frac35 \\
\frac15
\end{array}
\right](z)
%%%%%%%%%%%%
\theta
\left[
\begin{array}{c}
\frac35 \\
\frac35
\end{array}
\right](z)
%%%%%%%%%%%%%%%
\theta
\left[
\begin{array}{c}
\frac35 \\
1
\end{array}
\right](z)
%%%%%%%%%%%%
\theta
\left[
\begin{array}{c}
\frac35 \\
\frac75
\end{array}
\right](z)
%%%%%%%%%%%%
\theta
\left[
\begin{array}{c}
\frac35 \\
\frac95
\end{array}
\right](z)
}. 
\end{equation*}
By $\varphi(z),$ 
we obtain equation (\ref{eqn:quintic-1/5-1/5,3/5,1,7/5,9/5}). 
Equation (\ref{eqn:quintic-3/5-1/5,3/5,1,7/5,9/5}) can be derived by $\psi(z)$ in the same way. 
\par
We first note that 
in the fundamental parallelogram, 
the poles of $\varphi(z)$ are $z=(2\tau+2)/5, (2\tau+1)/5, 2\tau/5, (2\tau-1)/5,$ and $(2\tau-2)/5.$ 
The direct calculation yields 
\begin{equation*}
\mathrm{Res}\left(\varphi(z), \frac{2\tau+2}{5} \right)
=
-
\frac
{
\theta^5
\left[
\begin{array}{c}
\frac15 \\
\frac15
\end{array}
\right]
%%%%%%%%%%%%%%%%
}
{
\theta^{\prime}
\left[
\begin{array}{c}
1 \\
1
\end{array}
\right]
%%%%%%%%%%%%%
\theta^2
\left[
\begin{array}{c}
1 \\
\frac15
\end{array}
\right]
%%%%%%%%%%%%%%%
\theta^2
\left[
\begin{array}{c}
1 \\
\frac35
\end{array}
\right]
},
\end{equation*}
%%%%%%%%%%%%%%%%%%%%%%%%%%%%%%%%%%
\begin{equation*}
\mathrm{Res}\left(\varphi(z), \frac{2\tau+1}{5} \right)
=
\frac
{
\theta^5
\left[
\begin{array}{c}
\frac15 \\
\frac35
\end{array}
\right]
%%%%%%%%%%%%%
}
{
\theta^{\prime}
\left[
\begin{array}{c}
1 \\
1
\end{array}
\right]
%%%%%%%%%%%%%
\theta^2
\left[
\begin{array}{c}
1 \\
\frac15
\end{array}
\right]
%%%%%%%%%%%%%
\theta^2
\left[
\begin{array}{c}
1 \\
\frac35
\end{array}
\right]
},
\end{equation*}
%%%%%%%%%%%%%%%%%%%%%%%%%%%%%%%%%%
%%%%%%%%%%%%%%%%%%%%%%%%%%%%%%%%%%%%
%and
%%%%%%%%%%%%%%%%%%%%%%%%%%%%%%%
%%%%%%%%%%%%%%%%%%%%%%%%%%%%%%%%%%%
\begin{equation*}
\mathrm{Res}\left(\varphi(z), \frac{2\tau}{5} \right)
=
-
\frac
{
\theta^5
\left[
\begin{array}{c}
\frac15 \\
1
\end{array}
\right]
%%%%%%%%%%%%%%%%
}
{
\theta^{\prime}
\left[
\begin{array}{c}
1 \\
1
\end{array}
\right]
\theta^2
\left[
\begin{array}{c}
1 \\
\frac15
\end{array}
\right]
\theta^2
\left[
\begin{array}{c}
1 \\
\frac35
\end{array}
\right]
},
\end{equation*}
%%%%%%%%%%%%%%%%%%%%%%%%%%%
\begin{equation*}
\mathrm{Res}\left(\varphi(z), \frac{2\tau-1}{5} \right)
=
\frac
{
\theta^5
\left[
\begin{array}{c}
\frac15 \\
\frac75
\end{array}
\right]
%%%%%%%%%%%%%
}
{
\theta^{\prime}
\left[
\begin{array}{c}
1 \\
1
\end{array}
\right]
%%%%%%%%%%%%%
\theta^2
\left[
\begin{array}{c}
1 \\
\frac15
\end{array}
\right]
%%%%%%%%%%%%%
\theta^2
\left[
\begin{array}{c}
1 \\
\frac35
\end{array}
\right]
},
\end{equation*}
and
\begin{equation*}
\mathrm{Res}\left(\varphi(z), \frac{2\tau-2}{5} \right)
=
-
\frac
{
\theta^5
\left[
\begin{array}{c}
\frac15 \\
\frac95
\end{array}
\right]
%%%%%%%%%%%%%
}
{
\theta^{\prime}
\left[
\begin{array}{c}
1 \\
1
\end{array}
\right]
%%%%%%%%%%%%%
\theta^2
\left[
\begin{array}{c}
1 \\
\frac15
\end{array}
\right]
%%%%%%%%%%%%%
\theta^2
\left[
\begin{array}{c}
1 \\
\frac35
\end{array}
\right]
}.
\end{equation*}
Since 
\begin{align*}
&
\mathrm{Res}\left(\varphi(z),(2\tau+2)/5 \right)
+
\mathrm{Res}\left(\varphi(z), (2\tau+1)/5 \right)
+
\mathrm{Res}\left(\varphi(z),2\tau/5 \right) \\
&\hspace{20mm}+
\mathrm{Res}\left(\varphi(z), (2\tau-1)/5 \right)
+
\mathrm{Res}\left(\varphi(z), (2\tau-2)/5 \right)
=0,
\end{align*}
equation (\ref{eqn:quintic-1/5-1/5,3/5,1,7/5,9/5}) follows. 
\end{proof}

\begin{theorem}
\label{thm:quintic-1/5,3/5,1,7/5,9/5-1/5,3/5}
{\it
For every $\tau\in\mathbb{H}^2, $ we have 
\begin{equation}
\label{thm:quintic-1/5,3/5,1,7/5,9/5-1/5}
\zeta_5
\theta^5
\left[
\begin{array}{c}
\frac15   \\
\frac15
\end{array}
\right]
+\zeta_5^3
\theta^5
\left[
\begin{array}{c}
\frac35   \\
\frac15
\end{array}
\right]
+
\theta^5
\left[
\begin{array}{c}
1   \\
\frac15
\end{array}
\right]
-
\zeta_5^2
\theta^5
\left[
\begin{array}{c}
\frac35   \\
\frac95
\end{array}
\right]
-
\zeta_5^4
\theta^5
\left[
\begin{array}{c}
\frac15   \\
\frac95
\end{array}
\right]
=0,
\end{equation}
and
\begin{equation}
\label{thm:quintic-1/5,3/5,1,7/5,9/5-3/5}
\zeta_5^3
\theta^5
\left[
\begin{array}{c}
\frac15   \\
\frac35
\end{array}
\right]
+
\zeta_5^4
\theta^5
\left[
\begin{array}{c}
\frac35   \\
\frac35
\end{array}
\right]
+
\theta^5
\left[
\begin{array}{c}
1   \\
\frac35
\end{array}
\right]
-
\zeta_5
\theta^5
\left[
\begin{array}{c}
\frac35   \\
\frac75
\end{array}
\right]
-
\zeta_5^2
\theta^5
\left[
\begin{array}{c}
\frac35   \\
\frac75
\end{array}
\right]
=0.
\end{equation}
}
\end{theorem}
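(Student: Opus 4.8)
The plan is to mimic exactly the residue-calculus argument used in the proof of Theorem \ref{thm:quintic-1/5,3/5-1/5,3/5,1,7/5,9/5}, but with an elliptic function whose denominator is built from the theta functions appearing in \eqref{thm:quintic-1/5,3/5,1,7/5,9/5-1/5}. Concretely, for \eqref{thm:quintic-1/5,3/5,1,7/5,9/5-1/5} I would consider
\begin{equation*}
\Phi(z)
=
\frac
{
\theta^5
\left[
\begin{array}{c}
1 \\
1
\end{array}
\right](z)
}
{

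\theta
\left[
\begin{array}{c}
\frac15 \\
\frac15
\end{array}
\right](z)

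\theta
\left[
\begin{array}{c}
\frac35 \\
\frac15
\end{array}
\right](z)

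\theta
\left[
\begin{array}{c}
1 \\
\frac15
\end{array}
\right](z)

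\theta
\left[
\begin{array}{c}
\frac35 \\
\frac95
\end{array}
\right](z)

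\theta
\left[
\begin{array}{c}
\frac15 \\
\frac95
\end{array}
\right](z)
},
\end{equation*}
and the analogous function for \eqref{thm:quintic-1/5,3/5,1,7/5,9/5-3/5}. First I would check that $\Phi$ is genuinely elliptic: using the transformation laws \eqref{eqn:integer-char} and \eqref{eqn:real-char}, the quasi-periodicity factors of the numerator $\theta^5[{1\atop1}]$ must cancel those of the five-fold product in the denominator. Since the five characteristics in the denominator have first entries $1/5,3/5,1,3/5,1/5$ (whose sum modulo $2$ matches $5\cdot 1$ appropriately) and second entries $1/5$ throughout for the first identity, this cancellation should go through, possibly up to a constant, exactly as it did for $\varphi$ and $\psi$.

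Next I would locate the poles of $\Phi$ in the fundamental parallelogram. Recall from Section \ref{sec:properties} that $\theta[{\epsilon\atop\epsilon'}](z)$ has its unique zero at $z=\frac{1-\epsilon}{2}\tau+\frac{1-\epsilon'}{2}$. Thus the five denominator factors vanish at
$$
z=\tfrac{2\tau+2}{5},\quad \tfrac{\tau+2}{5},\quad \tfrac{2}{5},\quad \tfrac{\tau-2}{5},\quad\text{(something symmetric)},
$$
i.e. at five distinct points lying on the line $\Im z = \tfrac{k}{5}\Im\tau$ for appropriate $k$; I would write these out precisely. The numerator $\theta[{1\atop1}](z)$ vanishes (to order $5$) at $z=0$, which is generically not one of these five points, so each pole is simple. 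Then I would compute the five residues: at a simple pole $z_0$ where the denominator factor $\theta[{\epsilon_j\atop\epsilon_j'}]$ vanishes, the residue is
$$
\frac{\theta^5[{1\atop1}](z_0)}{\theta'[{\epsilon_j\atop\epsilon_j'}](z_0)\prod_{i\neq j}\theta[{\epsilon_i\atop\epsilon_i'}](z_0)}.
$$
Using \eqref{eqn:integer-char} and \eqref{eqn:real-char} to shift all arguments back to $z=0$, each such residue becomes $\theta^5$ of the corresponding characteristic from \eqref{thm:quintic-1/5,3/5,1,7/5,9/5-1/5}, times a common factor of the form $\big(\theta'[{1\atop1}]\,\theta^2[{1\atop\ast}]\,\theta^2[{1\atop\ast}]\big)^{-1}$, times a root of unity. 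Since the sum of all residues of an elliptic function over the fundamental parallelogram is zero, dividing out the common factor yields the claimed identity.

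The main obstacle will be bookkeeping the roots of unity $\zeta_5,\zeta_5^2,\zeta_5^3,\zeta_5^4$ and the signs correctly. These arise from the exponential prefactors $\exp(2\pi i[\cdots])$ in \eqref{eqn:integer-char} and \eqref{eqn:real-char} when translating each pole $z_0=\frac{a\tau+b}{5}$ back to the origin: the term $-mz-\frac{m^2\tau}{2}$ with the relevant $m\in\{1/5,2/5,\dots\}$ (or integer shifts thereof) produces fifth roots of unity rather than the signs $\pm1$ that appeared in Theorem \ref{thm:quintic-1/5,3/5-1/5,3/5,1,7/5,9/5} (where all the second characteristics were the same and only sign alternation survived). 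I would tabulate, for each of the five poles, the precise shift $(m,n)$ needed, apply \eqref{eqn:real-char} once and \eqref{eqn:integer-char} once, collect the scalar, and verify it equals the coefficient $\zeta_5, \zeta_5^3, 1, -\zeta_5^2, -\zeta_5^4$ (respectively for the second identity, $\zeta_5^3,\zeta_5^4,1,-\zeta_5,-\zeta_5^2$) asserted in the statement; the alternating signs should come from the extra $\exp(\pi i\epsilon n)$ factor in the second displayed transformation law of Section \ref{sec:properties} together with the $\theta[{-\epsilon\atop-\epsilon'}](z)=\theta[{\epsilon\atop\epsilon'}](-z)$ relation used to fold the "negative" characteristics $9/5\equiv -1/5$ and $7/5\equiv -3/5$ back into range. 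Everything else is the same residue-sum mechanism already established.
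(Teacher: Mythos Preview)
Your overall strategy is the paper's own strategy, but your specific choice of $\Phi$ has a genuine gap: it is \emph{not} elliptic. Under $z\mapsto z+1$ each factor $\theta\bigl[\begin{smallmatrix}\epsilon\\\epsilon'\end{smallmatrix}\bigr](z)$ picks up $\exp(\pi i\epsilon)$, so ellipticity requires the sum of the top characteristics in the numerator to agree with that in the denominator modulo $2$. For your $\Phi$ the numerator gives $5\cdot 1=5$ while the denominator gives $\tfrac15+\tfrac35+1+\tfrac35+\tfrac15=\tfrac{14}{5}$, and $5-\tfrac{14}{5}=\tfrac{11}{5}\notin 2\mathbb Z$. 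A similar mismatch occurs under $z\mapsto z+\tau$ (the bottom characteristics sum to $\tfrac{21}{5}$ rather than $5$). Hence the residue sum over a fundamental parallelogram has no reason to vanish, and the argument breaks down before you ever reach the bookkeeping of roots of unity.

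The paper avoids this by taking the denominator to be
\[
\theta\!\left[\begin{smallmatrix}1/5\\1/5\end{smallmatrix}\right]\!(z)\,
\theta\!\left[\begin{smallmatrix}3/5\\1/5\end{smallmatrix}\right]\!(z)\,
\theta\!\left[\begin{smallmatrix}1\\1/5\end{smallmatrix}\right]\!(z)\,
\theta\!\left[\begin{smallmatrix}7/5\\1/5\end{smallmatrix}\right]\!(z)\,
\theta\!\left[\begin{smallmatrix}9/5\\1/5\end{smallmatrix}\right]\!(z),
\]
so that the top entries sum to $5$ and the bottom entries sum to $1$, matching the numerator; this $\varphi$ is genuinely elliptic and the residue theorem applies. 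The constants $\theta\bigl[\begin{smallmatrix}3/5\\9/5\end{smallmatrix}\bigr]$ and $\theta\bigl[\begin{smallmatrix}1/5\\9/5\end{smallmatrix}\bigr]$ that appear in the statement only emerge \emph{after} the residue computation, when you use $\theta\bigl[\begin{smallmatrix}7/5\\1/5\end{smallmatrix}\bigr](0)=\theta\bigl[\begin{smallmatrix}-3/5\\1/5\end{smallmatrix}\bigr](0)=\theta\bigl[\begin{smallmatrix}3/5\\-1/5\end{smallmatrix}\bigr](0)$ and then shift the bottom characteristic by $+2$ via $\theta\bigl[\begin{smallmatrix}\epsilon\\\epsilon'+2\end{smallmatrix}\bigr]=\exp(\pi i\epsilon)\,\theta\bigl[\begin{smallmatrix}\epsilon\\\epsilon'\end{smallmatrix}\bigr]$ (and similarly for $9/5$). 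That last step is exactly where the powers of $\zeta_5$ and the minus signs in \eqref{thm:quintic-1/5,3/5,1,7/5,9/5-1/5} arise. In short: build the elliptic function with second characteristic fixed at $1/5$ (respectively $3/5$) and first characteristic running over $1/5,3/5,1,7/5,9/5$; do the residue sum; then fold $7/5,9/5$ back to $3/5,1/5$ at the level of theta \emph{constants}.
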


\begin{proof}
Consider the following elliptic functions:
\begin{equation*}
\varphi(z)
=
\frac
{
\theta^5
\left[
\begin{array}{c}
1 \\
1
\end{array}
\right](z)
}
{
%%%%
\theta
\left[
\begin{array}{c}
\frac15 \\
\frac15
\end{array}
\right](z)
%%%%%%%%%%%%
\theta
\left[
\begin{array}{c}
\frac35 \\
\frac15
\end{array}
\right](z)
%%%%%%%%%%%%%%%
\theta
\left[
\begin{array}{c}
1 \\
\frac15
\end{array}
\right](z)
%%%%%%%%%%%%%%%
\theta
\left[
\begin{array}{c}
\frac75 \\
\frac15
\end{array}
\right](z)
%%%%%%%%%%%%%%%
\theta
\left[
\begin{array}{c}
\frac95 \\
\frac15
\end{array}
\right](z)
}, 
\end{equation*}
and
\begin{equation*}
\psi(z)
=
\frac
{
\theta^5
\left[
\begin{array}{c}
1 \\
1
\end{array}
\right](z)
}
{
%%%%
\theta
\left[
\begin{array}{c}
\frac15 \\
\frac35
\end{array}
\right](z)
%%%%%%%%%%%%
\theta
\left[
\begin{array}{c}
\frac35 \\
\frac35
\end{array}
\right](z)
%%%%%%%%%%%%%%%
\theta
\left[
\begin{array}{c}
1 \\
\frac35
\end{array}
\right](z)
%%%%%%%%%%%%
\theta
\left[
\begin{array}{c}
\frac75 \\
\frac35
\end{array}
\right](z)
%%%%%%%%%%%%
\theta
\left[
\begin{array}{c}
\frac95 \\
\frac35
\end{array}
\right](z)
}. 
\end{equation*}
The theorem can be proved in the same way as Theorem \ref{thm:quintic-1/5,3/5-1/5,3/5,1,7/5,9/5}.  
\end{proof}

\subsection*{Rermark}
The identity (\ref{eqn:quintic-1/5-1/5,3/5,1,7/5,9/5}) was proved by Farkas and Kra \cite[pp. 277]{Farkas-Kra-1}. 
We believe that the other identities in Theorems \ref{thm:quintic-1/5,3/5-1/5,3/5,1,7/5,9/5} and 
\ref{thm:quintic-1/5,3/5,1,7/5,9/5-1/5,3/5} are new.

\section{Rational expressions of theta constants (1)}  
\label{sec:rational(1)}

%%%%%%%%%%%%%%%%%%%%%%%%%%%%%%%%%%%%%%%%%%%%%%%%%%%%%%%
%%%%%%%%%%%%%%%%%%%%%%%%%%%%%%%%%%%%%%%%%%%%%%%%%%%%%
%%%%%%%%%%%%%%%%%%%%%%%%%%%%%%%%%%%%%%%%%%%%%%%%%%%%%%%%%
\subsection{The case for $j=1$ }

In this subsection, we prove the following theorem:

\begin{theorem}
\label{thm:main-rational-(1)}
{\it
$
\scriptsize
{
\theta
\left[
% [inline block 0: 916 envs, 128281 chars -> data_tex | \begin{array}{c} 1 \\...]

\right]
\Bigg\}.
\end{align*}
%}
}
\end{theorem}

\begin{proof}
The first equation follows from equations (\ref{eqn:(1/5,1/5)-rational-(1)}) and (\ref{eqn:(1/5,1/5)-rational-(2)}).  
The second equation is obtained by equations (\ref{eqn:(1/5,3/5)-rational-(1)}) and (\ref{eqn:(1/5,3/5)-rational-(2)}). 
The third equation is derived from equations (\ref{eqn:(1/5,1)-rational-(1)}) and (\ref{eqn:(1/5,1)-rational-(2)}). 
The fourth equation follows from equations (\ref{eqn:(1/5,7/5)-rational-(1)}) and (\ref{eqn:(1/5,7/5)-rational-(2)}). 
The fifth equation follows from equations (\ref{eqn:(1/5,9/5)-rational-(1)}) and (\ref{eqn:(1/5,9/5)-rational-(2)}). 
\end{proof}

\subsection{The case for $j=3$}
In this subsection, 
we prove the following theorem:

\begin{theorem}
\label{thm:alge-structure-(2)}
{\it

Among the following five theta constants, 
$$
\theta
\left[
% [inline block 1: 163 envs, 27168 chars -> data_tex | \begin{array}{c} \frac35 \\...]

\right]
\Bigg\}.    \label{eqn:(1/5,1/5)-(3/5,1/5)-delete}
\end{align}
}
}
\end{theorem}

\begin{proof}
Equation (\ref{eqn:(1/5,1)-(3/5,1)-delete}) 
follows from equations (\ref{eqn:rational-(1/5,1)delete}) 
and (\ref{eqn:rational-(3/5,1)delete}).  
%%%%%%%%%%%%%%%%%%%%%%%%%%%%%%%%%%%%
Equation (\ref{eqn:(1/5,9/5)-(3/5,9/5)-delete}) 
is obtained by equations (\ref{eqn:rational-(1/5,9/5)delete}) 
and (\ref{eqn:rational-(3/5,9/5)delete}).  
%%%%%%%%%%%%%%%%%%%%%%%%%%%%%%%%%%%%%
Equation (\ref{eqn:(1/5,3/5)-(3/5,3/5)-delete}) 
is derived from equations (\ref{eqn:rational-(1/5,3/5)delete}) 
and (\ref{eqn:rational-(3/5,3/5)delete}).  
%%%%%%%%%%%%%%%%%%%%%%%%%%%%%%%%%%%%%
Equation (\ref{eqn:(1/5,7/5)-(3/5,7/5)-delete}) 
follows from equations (\ref{eqn:rational-(1/5,7/5)delete}) 
and (\ref{eqn:rational-(3/5,7/5)delete}).  
%%%%%%%%%%%%%%%%%%%%%%%%%%%%%%%%%%%%%
Equation (\ref{eqn:(1/5,1/5)-(3/5,1/5)-delete}) 
is obtained from equations (\ref{eqn:rational-(1/5,1/5)delete}) 
and (\ref{eqn:rational-(3/5,1/5)delete}).  
\end{proof}

\subsection{Proof of Theorem \ref{thm:main-(0)}}

\begin{theorem}
\label{thm:main-(0)}
{\it
Among the following twelve theta constants, 
$$
\theta
\left[
\begin{array}{c}
\frac{j}{5} \\
\frac{k}{5}
\end{array}
\right], \,
\theta
\left[
\begin{array}{c}
1 \\
\frac{j}{5}
\end{array}
\right], 
\,\,
(j=1,3, \,k=1,3,5,7,9),
$$ 
there exist at most five theta constants which are algebraically independent over $\mathbb{C}.$
}
\end{theorem}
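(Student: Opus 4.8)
The plan is to reduce the statement to a transcendence-degree bound and then assemble it from the identities already proved. Write $K$ for the subfield of the field of meromorphic functions on $\mathbb{H}^2$ generated over $\mathbb{C}$ by the twelve theta constants; since the maximal size of an algebraically independent subset of a generating set equals $\trdeg_{\mathbb{C}}K$, the assertion is equivalent to $\trdeg_{\mathbb{C}}K\le 5$. Set $L_1=\mathbb{C}\left(\theta\!\left[\begin{smallmatrix}1/5\\ k/5\end{smallmatrix}\right]:k=1,3,5,7,9\right)$ and $L_3=\mathbb{C}\left(\theta\!\left[\begin{smallmatrix}3/5\\ k/5\end{smallmatrix}\right]:k=1,3,5,7,9\right)$. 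By Theorems \ref{thm:alge-structure-(1)} and \ref{thm:alge-structure-(2)} we have $\trdeg_{\mathbb{C}}L_1\le 3$ and $\trdeg_{\mathbb{C}}L_3\le 3$, and by any one of Theorems \ref{thm-rational-(1/5,1/5)-(3/5,3/5)}, \ref{thm-rational-(1/5,3/5)-(3/5,9/5)}, \ref{thm-rational-(1/5,7/5)-(3/5,1/5)}, \ref{thm-rational-(1/5,9/5)-(3/5,7/5)} both $\theta\!\left[\begin{smallmatrix}1\\ 1/5\end{smallmatrix}\right]$ and $\theta\!\left[\begin{smallmatrix}1\\ 3/5\end{smallmatrix}\right]$ lie in $L_1L_3$; hence $K=L_1L_3$, and a priori $\trdeg_{\mathbb{C}}K\le 6$. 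The whole content is to gain the final unit.

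The device for that is the ratio $t:=\theta\!\left[\begin{smallmatrix}1\\ 1/5\end{smallmatrix}\right]/\theta\!\left[\begin{smallmatrix}1\\ 3/5\end{smallmatrix}\right]$, which I claim lies in $L_1\cap L_3$ and is transcendental over $\mathbb{C}$. Membership in $L_1$ is exactly equation \eqref{eqn:rational-(1/5,1)delete} of Theorem \ref{thm:for-main(1)}, which exhibits $t$ as a rational function of $\theta\!\left[\begin{smallmatrix}1/5\\ 1/5\end{smallmatrix}\right],\theta\!\left[\begin{smallmatrix}1/5\\ 3/5\end{smallmatrix}\right],\theta\!\left[\begin{smallmatrix}1/5\\ 7/5\end{smallmatrix}\right],\theta\!\left[\begin{smallmatrix}1/5\\ 9/5\end{smallmatrix}\right]$; membership in $L_3$ is equation \eqref{eqn:rational-(3/5,1)delete} of Theorem \ref{thm:for-main(2)}. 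Transcendence of $t$ means precisely that $\theta\!\left[\begin{smallmatrix}1\\ 1/5\end{smallmatrix}\right]$ and $\theta\!\left[\begin{smallmatrix}1\\ 3/5\end{smallmatrix}\right]$ are $\mathbb{C}$-linearly independent, and a short computation with the defining theta series shows this: writing $x=\exp(\pi i\tau)$ one has $\theta\!\left[\begin{smallmatrix}1\\ \epsilon'\end{smallmatrix}\right]=2\cos(\pi\epsilon'/2)\,x^{1/4}+2\cos(3\pi\epsilon'/2)\,x^{9/4}+\cdots$, and the matrix with rows $(\cos(\pi/10),\cos(3\pi/10))$ and $(\cos(3\pi/10),\cos(9\pi/10))$ is nonsingular (its determinant is $-\cos^2(\pi/10)-\cos^2(3\pi/10)\neq 0$).

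With $t\in L_1\cap L_3$ transcendental, the count is immediate: $\mathbb{C}(t)\subseteq L_1$, and $L_1L_3$ is generated over $L_1$ by $L_3$, so a transcendence basis of $L_3$ over $\mathbb{C}(t)$ generates $L_1L_3$ algebraically over $L_1$ (because $\mathbb{C}(t)\subseteq L_1$); since $\trdeg_{\mathbb{C}(t)}L_3=\trdeg_{\mathbb{C}}L_3-1\le 2$, we get $\trdeg_{L_1}(L_1L_3)\le 2$ and therefore $\trdeg_{\mathbb{C}}K=\trdeg_{\mathbb{C}}L_1+\trdeg_{L_1}(L_1L_3)\le 3+2=5$. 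Equivalently, the cross-identities of Theorem \ref{thm:for-main-(7)} — obtained by eliminating $t$ between Theorems \ref{thm:for-main(1)} and \ref{thm:for-main(2)} — furnish the algebraic relation that forces the drop. Since every substantive identity is already available, there is no genuine obstacle here; the only places where care is needed are the $q$-expansion check that $t\notin\mathbb{C}$, the remark that the denominators in the cited rational expressions are not identically zero (established in their proofs, so that $t$ and the $\theta\!\left[\begin{smallmatrix}1\\ j/5\end{smallmatrix}\right]$ are honest elements of the function field), and the transcendence-degree bookkeeping of the last display.
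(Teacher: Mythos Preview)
Your argument is correct and is essentially the paper's own route made explicit: the paper simply cites Theorems \ref{thm-rational-(1/5,1/5)-(3/5,3/5)}, \ref{thm:main-rational-(3)}, \ref{thm:main-rational-(4)}, \ref{thm:alge-structure-(1)}, \ref{thm:alge-structure-(2)} and \ref{thm:for-main-(7)} without spelling out the transcendence-degree bookkeeping. Your use of the common ratio $t\in L_1\cap L_3$ via Theorems \ref{thm:for-main(1)} and \ref{thm:for-main(2)} is precisely what underlies the cross-identities of Theorem \ref{thm:for-main-(7)} (as you yourself observe), and your $q$-expansion check that $t$ is non-constant fills a detail the paper leaves implicit.
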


\begin{proof}
Theorem \ref{thm:main-(0)} follows from 
Theorems 
\ref{thm-rational-(1/5,1/5)-(3/5,3/5)},
\ref{thm:main-rational-(3)}, \ref{thm:main-rational-(4)}, \ref{thm:alge-structure-(1)}, \ref{thm:alge-structure-(2)} 
and \ref{thm:for-main-(7)}. 
\end{proof}


\begin{thebibliography}{10}
\bibitem{Farkas-Kra-paper}
H. M. Farkas and I. Kra, 
{\it Automorphic forms for subgroups of the modular group.}  
Israel J. Math. {\bf 82}, (1993), 87-13.



\bibitem{Farkas-Kra-1}
H. M. Farkas and I. Kra, 
{\it Theta Constants, Riemann Surfaces and the Modular Group.}
AMS Grad Studies in Math, vol 37, 2001.

\bibitem{Farkas}
H. M. Farkas, 
{\it
Theta functions in complex analysis and number theory. 
}  
Surveys in number theory,  
Dev. Math., {\bf 17}, (2008), 57-87. 


\bibitem{Matsuda}
K. Matsuda, 
{\it  The determinant expressions of some theta constant identities.}
Ramanujan J. {\bf 34}, (2014), 449-456. 


\end{thebibliography}
\end{document}